\theoremstyle{plain} 
\newtheorem{thm}{Theorem}
\newtheorem{lem}{Lemma}
\newtheorem*{thma}{Theorem A}
\newtheorem*{thmb}{Theorem B}
\theoremstyle{definition}
\theoremstyle{remark}
\DeclareMathOperator*{\stlim}{st-lim}
\newcommand{\C}{\mathbb{C}}
\newcommand{\R}{\mathbb{R}}
\newcommand{\e}{\varepsilon}
\newcommand{\loc}{\mathrm{loc}}
\newcommand{\imply}{\Rightarrow}
\def\refz#1{\/$\ref{#1}$}
\numberwithin{equation}{section}
\begin{document}

\title[Statistical extension of classical Tauberian theorems]{Statistical extension of classical Tauberian theorems in the case of logarithmic summability of locally integrable functions on $[1,\infty)$ } 

\author[F. M\'oricz {\protect\and} Z. N\'emeth]{Ferenc M\'oricz {\protect\and} Zolt\'an N\'emeth}
\address{Bolyai Institute, University of Szeged, Aradi v\'ertan\'uk tere 1, 6720 Szeged, Hungary}
\email{moricz@math.u-szeged.hu, znemeth@math.u-szeged.hu}

\begin{abstract} 
Let $s:[1,\infty) \to \C $ be a locally integrable function in Lebesgue's sense. The logarithmic (also called harmonic) mean of the function $s$ is defined by 
\[
\tau(t) := \frac 1{\log t} \int _1^t \frac {s(x)}{x} \,dx , \qquad t>1,
\]
where the logarithm is to base $e$. Besides the ordinary limit $\lim_{x\to \infty} s(x)$, we also use the notion of the so-called statistical limit of $s$ at $\infty$, in notation: $ \stlim_{x\to \infty} s(x)=\ell $, by which we mean that for every $\e>0$, 
\[
\lim_{b\to \infty} \frac 1b \Big | \Big \{ x\in(1,b): |s(x)-\ell| >\e \Big\} \Big| = 0 . 
\]
We also use the ordinary limit $\lim _{t\to\infty} \tau(t)$ as well as the statistical limit  $\stlim _{t\to\infty} \tau(t)$.

We will prove the following Tauberian theorem: Suppose that the real-valued function $s$ is slowly decreasing or the complex-valued $s$ is slowly oscillating. If the statistical limit   $\stlim _{t\to\infty} \tau(t) =\ell $ exists, then the ordinary limit $\lim _{x\to\infty} s(x) = \ell $ also exists.

\end{abstract}

\subjclass{
Primary 40C10, Secondary  40E05, 40G05. 
}

\keywords{
Statistical limit of a measurable function at $\infty$, logarithmic summability $(L,1)$ of a locally integrable function, slow decrease and slow oscillation with respect to summability $(L,1)$, nondiscrete version of a Vijayaraghavan lemma, Landau type one-sided and Hardy type two-sided Tauberian conditions.
}

\maketitle

\section{Introduction} 

We consider real- and complex-valued functions which are measurable (in Lebesgue's sense) on some interval $(a,\infty)$, where $a\ge 0$. We recall (see in \cite{kilenc}) that a function $s$ has \emph{statistical limit at} $\infty$ if there exists a number $\ell$ such that for every $\e>0$, 
\begin{equation} \label{1.1}
\lim_{b\to \infty} \frac {1}{b-a} \Big| \Big\{ x\in(a,b): |s(x)-\ell|>\e \Big\} \Big| = 0 , 
\end{equation} 
where by $|\{ \cdot \}| $ we denote the Lebesgue measure of the set indicated in $\{\cdot\}$. If this is the case, we write 
\begin{equation} \label{1.2}
\stlim_{x\to\infty} s(x) = \ell. 
\end{equation} 

Clearly, the statistical limit $\ell$ is uniquely determined if it exists. The particular choice of the left endpoint $a$ in the definition domain of the function $s$ is indifferent in \eqref{1.1}. Furthermore, if the ordinary limit 
\begin{equation} \label{1.3}
\lim_{x\to\infty} s(x) = \ell 
\end{equation} 
exists, then \eqref{1.2} also exists. But the converse implication is not true in general. 

We note that the notion of the statistical limit of a measurable function at $\infty$ is the nondiscrete version of the notion of the statistical convergence of a sequence of real or complex numbers, which was introduced by Fast \cite{egy}. 

If the function $s: [1,\infty) \to \C $ is integrable in Lebesgue's sense in every bounded interval $(1,t)$, $t>1$, in symbols: $ f\in L^1_\loc [1,\infty)$, then its logarithmic (also called harmonic) mean $\tau(t)$ of order $1$ is defined by 
\begin{equation} \label{1.4}
\tau(t) := \frac 1 {\log t} \int _1 ^ t \frac {s(x)}{x} \,dx , \qquad t>1,
\end{equation} 
where the logarithm is to the natural base $e$. The function $s$ is said to be \emph {logarithmic summable at} $\infty$, or biefly \emph{summable} $(L,1)$, if the finite limit 
\begin{equation} \label{1.5}
\lim_{t\to\infty} \tau(t) = \ell
\end{equation} 
exists. It is easy to check that if the ordinary limit \eqref{1.3} exists, then \eqref{1.5} also exists with the same $\ell$. 

The converse implication $ \eqref{1.5} \imply \eqref{1.3}$ is usually false. However, if we subject the function $s$ to an appropriate additional condition, then the implication $ \eqref{1.5} \imply \eqref{1.3}$ does hold. Such conditions are called `Tauberian' ones, and the theorems involving such conditions are also called `Tauberian' ones; after A.~Tauber \cite{tizenket}, who first proved one of the simplest of them. 

We recall (see in \cite{ujtiz}), that a function $s:[1,\infty) \to \R $ is said to be \emph{slowly decreasing} with respect to logarithmic summability, or briefly: summability $(L,1)$, if 
\begin{equation} \label{1.6}
\lim _{\lambda \to 1^+} \liminf _{x\to \infty} \inf _{\log x < \log t \le \lambda \log x } (s(t) - s(x)) \ge 0. 
\end{equation} 

Since the auxiliary function 
\[
a(\lambda) := \liminf _{x\to \infty} \inf _{\log x < \log t \le \lambda \log x}  (s(t) - s(x)),  \qquad \lambda>1, 
\]
is clearly decreasing in $\lambda$ on the interval $(1,\infty)$, the term `$  \lim_ {\lambda \to 1^+} $' in \eqref{1.6} can be equivalently replaced by `$\sup _{\lambda >1} $'. 

We observe that the conditions 
\[
\log x < \log t \le \lambda\log x \qquad \text{and} \qquad x< t \le x^\lambda, \qquad x>1, 
\]
are equivalent. In  the sequel, we will use the second one of them. It is easy to check that condition \eqref{1.6} is satisfied if and only if for every $\e>0$ there exist $x_0=x_0(\e)>1 $ and $ \lambda =\lambda(\e) >1$, the latter one is as close to 1 as we  want, such that 
\begin{equation} \label{1.7}
s(t) - s(x) \ge -\e \qquad \text{whenever} \quad x_0\le x < t \le x^\lambda . 
\end{equation}

Historically, the term `slow decrease' was introduced by Schmidt \cite{tizenegy} (see also in \cite[p.~124]{negy}), in the case of the summability $(C,1)$ of sequences of real numbers. 

We note that the symmetric counterpart of the notion of slow decrease is the folloving one: a real-valued function $s$ is said to be \emph {slowly increasing} with respect to summability $(L,1)$ if 
\[
\lim _{\lambda \to 1^+} \limsup _{x\to \infty} \sup _{x < t \le x^ \lambda } (s(t) - s(x)) \le 0. 
\]
Clearly, a function $s$ is slowly increasing if and only if the function $(-s)$ is slowly decreasing. 

We recall (see in \cite{ujtiz}), that a function $s: [1,\infty) \to \C $ is said to be \emph{slowly oscillating} with respect to summability $(L,1)$ if
\begin{equation} \label{1.8}
\lim _{\lambda \to 1^+} \limsup _{x\to \infty} \sup _{x < t \le x ^ \lambda} | s(t) - s(x) | = 0. 
\end{equation}
Again, the term  `$  \lim_ {\lambda \to 1^+} $' in \eqref{1.8} can be equivalently replaced by `$ \inf_  {\lambda >1} $'. 

Analogously to \eqref{1.7}, condition \eqref{1.8} is satisfied if and only if for every  $\e>0$ there exist $x_0=x_0(\e)>1 $ and $ \lambda =\lambda(\e) >1$, the latter one is as close as to 1 as we  want, such that 
\begin{equation} \label{1.9}
|s(t) - s(x) | \le \e \qquad \text{whenever} \quad x_0\le x < t \le x^\lambda . 
\end{equation}
It is easy to see that a real-valued function $s$ is slowly oscillating if and only if $s$ is both slowly decreasing and slowly increasing. 

Historically,  the term `slow oscillation' was introduced by Hardy \cite{harom} (see also in \cite[p.~124]{negy}), in the case of the summability $(C,1)$ of sequences of numbers. 

We note that in the special case when 
\[
s(x) := \int _1^x f(u) \,du, \qquad x\ge1, 
\]
where $f\in L^1_\loc[1,\infty)$, one can easily get sufficient conditions for the fulfillment of \eqref{1.7} and \eqref{1.9}, respectively. If $f$ is a real-valued function such that 
\begin{equation}\label{1.10}
u (\log u) f(u) \ge -C \qquad \text{at almost every} \quad u> x_0 ,
\end{equation}
where $C>0$ and $x_0\ge 1$ are constants, then the function $s$ is slowly decreasing with respect to summability $(L,1)$. Furthermore, if $f$ is a complex-valued function such that 
\begin{equation}\label{1.11}
 (\log u) |f(u)| \le C \qquad \text{at almost every} \quad u> x_0 ,
\end{equation}
where $C>0$ and $x_0\ge 1$ are constants, then the function $s$ is slowly oscillating  with respect to summability $(L,1)$.

Condition \eqref{1.10} is called a one-sided Tauberian condition, while \eqref{1.11} is called a two-sided Tauberian condition. These terms go back to Landau \cite{nyolc} with respect to summability  $(C,1)$ of sequences of real numbers; and to Hardy  \cite{harom} (see also in \cite[p.~124]{negy}) with respect to summability $(C,1)$ of sequences of real or complex numbers.  

The following two classical Tauberian theorems were proved in \cite[Corollaries 1 and 2]{ujtiz}.

\begin{thma}\label{tha} 
If a function\/ $s:[1,\infty) \to \R $ is slowly decreasing with respect to summability\/ $(L,1)$, then the implication\/ $\eqref{1.5} \imply \eqref{1.3} $ holds true.
\end{thma}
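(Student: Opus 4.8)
The plan is to compare the value $s(x)$ with a suitable weighted average of $s$ over a short multiplicative interval $[x,x^\lambda]$, and to exploit the fact that such an average can be written purely in terms of the means $\ta$, which converge to $\ell$ by hypothesis \eqref{1.5}. Writing $L(t):=\int_1^t \frac{s(u)}{u}\,du = \ta(t)\log t$ for the numerator in \eqref{1.4}, the basic observation is that for any fixed $\lambda>1$,
\[
\frac{1}{(\lambda-1)\log x}\int_x^{x^\lambda}\frac{s(u)}{u}\,du = \frac{L(x^\lambda)-L(x)}{(\lambda-1)\log x} = \frac{\lambda\,\ta(x^\lambda)-\ta(x)}{\lambda-1},
\]
since $\int_x^{x^\lambda}\frac{du}{u}=(\lambda-1)\log x$. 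Letting $x\to\infty$ and using \eqref{1.5} for both $\ta(x^\lambda)$ and $\ta(x)$, the right-hand side tends to $\frac{\lambda\ell-\ell}{\lambda-1}=\ell$; thus this average converges to $\ell$ for every fixed $\lambda>1$, and symmetrically so does the average over $[x^{1/\lambda},x]$.

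Next I would feed in the Tauberian hypothesis. Fix $\e>0$ and invoke \eqref{1.7} to produce $x_0$ and $\lambda>1$ with $s(u)-s(x)\ge-\e$ whenever $x_0\le x<u\le x^\lambda$. For such $x$ and all $u\in[x,x^\lambda]$ we then have $s(u)\ge s(x)-\e$, so the displayed average dominates $s(x)-\e$:
\[
s(x)-\e \le \frac{1}{(\lambda-1)\log x}\int_x^{x^\lambda}\frac{s(u)}{u}\,du = \frac{\lambda\,\ta(x^\lambda)-\ta(x)}{\lambda-1}.
\]
Taking $\limsup_{x\to\infty}$ and using the first paragraph gives $\limsup_{x\to\infty} s(x)\le \ell+\e$, and since $\e$ is arbitrary, $\limsup_{x\to\infty} s(x)\le \ell$.

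For the reverse inequality I would run the same argument on the left interval $[x^{1/\lambda},x]$. With $\mu:=1/\lambda<1$ one has $u\le x\le u^\lambda$ for every $u\in[x^\mu,x]$, so for large $x$ the condition \eqref{1.7}, applied with base point $u$, yields $s(u)\le s(x)+\e$ on that interval, whence
\[
\frac{1}{(1-\mu)\log x}\int_{x^\mu}^{x}\frac{s(u)}{u}\,du \le s(x)+\e.
\]
The left-hand side equals $\frac{\ta(x)-\mu\,\ta(x^\mu)}{1-\mu}$ and tends to $\ell$ as $x\to\infty$, so $\liminf_{x\to\infty} s(x)\ge \ell-\e$ and hence $\liminf_{x\to\infty} s(x)\ge \ell$. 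Combining the two bounds yields $\lim_{x\to\infty} s(x)=\ell$, which is \eqref{1.3}.

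I expect the only delicate point to be the bookkeeping of quantifiers: the slow-decrease condition supplies $\lambda$ and $x_0$ only after $\e$ has been fixed, so I must be careful to hold this single $\lambda=\lambda(\e)$ fixed while letting $x\to\infty$ in the convergence $\frac{\lambda\,\ta(x^\lambda)-\ta(x)}{\lambda-1}\to\ell$. Because that convergence is valid for each fixed $\lambda>1$, no uniformity in $\lambda$ is required and the estimates close cleanly. A conceptually equivalent route is the substitution $u=\log x$, $\sigma(u):=s(e^u)$, which turns \eqref{1.4} into an ordinary Ces\`aro $(C,1)$ mean and \eqref{1.7} into Ces\`aro slow decrease, thereby reducing Theorem A to the classical Ces\`aro Tauberian theorem; but the direct computation above seems the most transparent.
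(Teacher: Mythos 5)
Your proof is correct. Note first that the paper itself does not prove Theorem A: it quotes it as a known result from \cite{ujtiz} (``Corollaries 1 and 2''), and the machinery developed in the paper (the Vijayaraghavan-type Lemmas 1--4, the statistical-limit selection of the sequence $(b_n)$) is aimed at the new Theorems 1--4, not at this statement. So the only available comparison is between your argument and that general toolkit, and your route is the classical one: you compare $s(x)$ with the delayed mean over $[x,x^\lambda]$ and the advanced mean over $[x^{1/\lambda},x]$, observe via $L(t)=\ta(t)\log t$ that each such mean is a fixed linear combination $\frac{\lambda\ta(x^\lambda)-\ta(x)}{\lambda-1}$ (resp.\ $\frac{\ta(x)-\mu\ta(x^\mu)}{1-\mu}$) of logarithmic means and hence tends to $\ell$ for each fixed $\lambda$, and then use slow decrease once from each side to squeeze $s(x)$ between these means up to $\e$. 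The quantifier bookkeeping you flag is handled correctly: $\lambda=\lambda(\e)$ is fixed before letting $x\to\infty$, and no uniformity in $\lambda$ is needed. The one small point worth making explicit is that in the reverse inequality the application of \eqref{1.7} with base point $u$ requires $u\ge x_0$, i.e.\ $x\ge x_0^\lambda$, which your ``for large $x$'' covers. Your closing remark is also sound: the substitution $\sigma(v):=s(e^v)$ turns the logarithmic mean into a Ces\`aro $(C,1)$ mean and \eqref{1.7} into the classical Schmidt slow-decrease condition, so Theorem A is literally equivalent to the classical $(C,1)$ Tauberian theorem for functions; this is presumably how \cite{ujtiz} obtains it, and it explains why no new ideas beyond the delayed-mean comparison are needed here.
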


\begin{thmb}\label{thb} 
If a function\/ $s:[1,\infty) \to \C $ is slowly oscillating with respect to summability\/ $(L,1)$, then the implication\/ $\eqref{1.5} \imply \eqref{1.3} $ holds true.
\end{thmb}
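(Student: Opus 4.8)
The plan is to prove the theorem in two stages: first promote the statistical limit of $\ta$ to the genuine limit \eqref{1.5}, i.e.\ $\lim_{t\to\infty}\ta(t)=\ell$, and then read off \eqref{1.3} from Theorems~A and~B, whose Tauberian hypotheses are exactly those assumed here. The passage from the statistical to the ordinary limit of $\ta$ will be carried out through a nondiscrete Vijayaraghavan-type lemma, and producing the hypotheses of that lemma is where the difficulty is concentrated.

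The basic computational tool is the averaging identity obtained by splitting \eqref{1.4} at $x$: for every $\lambda>1$ and $x>1$,
\[
\ta(x^\lambda)=\frac1\lambda\,\ta(x)+\frac{1}{\lambda\log x}\int_x^{x^\lambda}\frac{s(u)}{u}\,du,
\]
so that the weighted mean $\frac{1}{(\lambda-1)\log x}\int_x^{x^\lambda}s(u)\,u^{-1}\,du$ equals the affine combination $(\lambda\ta(x^\lambda)-\ta(x))/(\lambda-1)$ of two values of $\ta$. The crucial preparatory step — and the main obstacle — is to show that $s$ is \emph{bounded} on some $[x_1,\infty)$. Since $\stlim_{t\to\infty}\ta(t)=\ell$, the set $G_\e:=\{t:|\ta(t)-\ell|\le\e\}$ has density $1$, so for $x$ with $x,x^\lambda\in G_\e$ the combination above is within $O(\e)$ of $\ell$; inserting the one-sided estimate $s(u)\ge s(x)-\e$ valid on $[x,x^\lambda]$ by slow decrease \eqref{1.7} (or the two-sided estimate \eqref{1.9} when $s$ is slowly oscillating) bounds $s(x)$ from above, while splitting instead at $x^{1/\lambda}$ bounds it from below. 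To guarantee that $x$ and $x^\lambda$ (and $x^{1/\lambda}$) lie in $G_\e$ simultaneously for a set of $x$ of density $1$, I would verify the measure-theoretic fact that the preimage of a density-$0$ set under $x\mapsto x^\lambda$ (and under $x\mapsto x^{1/\lambda}$) is again of density $0$; this is an integration-by-parts estimate exploiting the decreasing weight $u^{1/\lambda-1}$ in the substitution $u=x^\lambda$. Finally, slow decrease (resp.\ slow oscillation) propagates the bound from this density-$1$ set to \emph{all} large $x$, since every interval $(x,x^\lambda]$ meets it. I expect the one-sided (slowly decreasing) case to be the delicate one: there only a lower estimate for $s(u)-s(x)$ is at hand, and the lower bound for $s$ must be recovered by the mirror-image splitting rather than directly.

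With $|s|\le M$ in hand, slow oscillation of $\ta$ is immediate: applying the averaging identity with any $z\le x^\lambda$ in place of $x^\lambda$ yields $\sup_{x<z\le x^\lambda}|\ta(z)-\ta(x)|\le 2M\,(\lambda-1)/\lambda$, which tends to $0$ as $\lambda\to1^+$, so $\ta$ satisfies \eqref{1.8}. Now the Vijayaraghavan lemma applies, namely that a slowly oscillating function with statistical limit $\ell$ has ordinary limit $\ell$: were $\lim_{t\to\infty}\ta(t)=\ell$ to fail, there would be $\e_0>0$ and $y_n\to\infty$ with $|\ta(y_n)-\ell|>2\e_0$, and fixing $\lambda$ from \eqref{1.8} for $\e_0$ forces $|\ta(z)-\ell|>\e_0$ for all $z\in[y_n,y_n^\lambda]$; hence the set $\{t:|\ta(t)-\ell|>\e_0\}$ contains $[y_n,y_n^\lambda]$, whose relative measure in $(1,y_n^\lambda)$ is $1-y_n^{1-\lambda}\to1$, contradicting that this set has density $0$. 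Thus \eqref{1.5} holds, and it remains only to invoke Theorem~A in the slowly decreasing case and Theorem~B in the slowly oscillating case to obtain \eqref{1.3}, which completes the proof.
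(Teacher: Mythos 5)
You have proved the wrong statement, and as a proof of Theorem~B your argument is circular. Theorem~B takes the \emph{ordinary} limit \eqref{1.5} of the logarithmic means as its hypothesis; your proposal instead assumes only the statistical limit \eqref{2.1}, devotes all of its work to upgrading $\stlim_{t\to\infty}\ta(t)=\ell$ to $\lim_{t\to\infty}\ta(t)=\ell$, and then closes with ``it remains only to invoke Theorem~A in the slowly decreasing case and Theorem~B in the slowly oscillating case to obtain \eqref{1.3}.'' That final step appeals to precisely the theorem you were asked to prove. What you have sketched is, in substance, the paper's main new result (Theorems \ref{th3} and \ref{th4}), which indeed uses Theorem~B as a black box; but the Tauberian core of Theorem~B itself --- that slow oscillation together with $\lim_{t\to\infty}\ta(t)=\ell$ forces $\lim_{x\to\infty}s(x)=\ell$ --- is nowhere established in your text. (The paper does not reprove it either; it quotes it from Corollaries 1 and 2 of \cite{ujtiz}, so the comparison here is with that classical argument.)

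The frustrating part is that the averaging identity you wrote down is exactly what a direct, non-circular proof needs. Fix $\e>0$ and take $x_0=x_0(\e)$ and $\lambda=\lambda(\e)>1$ as in \eqref{1.9}. Since $\int_x^{x^\lambda}u^{-1}\,du=(\lambda-1)\log x$, your identity reads
\[
\frac{\lambda\,\ta(x^\lambda)-\ta(x)}{\lambda-1}
=\frac{1}{(\lambda-1)\log x}\int_x^{x^\lambda}\frac{s(u)}{u}\,du ,
\]
and the right-hand side is a weighted mean of $s$ over $(x,x^\lambda]$ with total weight $1$. Consequently, by \eqref{1.9},
\[
\Big|\, s(x)-\frac{1}{(\lambda-1)\log x}\int_x^{x^\lambda}\frac{s(u)}{u}\,du \,\Big|
\le\sup_{x<u\le x^\lambda}|s(u)-s(x)|\le\e
\qquad\text{for all}\quad x\ge x_0 ,
\]
while the hypothesis \eqref{1.5} makes the left-hand side of the identity tend to $(\lambda\ell-\ell)/(\lambda-1)=\ell$ as $x\to\infty$ for this fixed $\lambda$. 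Hence $\limsup_{x\to\infty}|s(x)-\ell|\le\e$, and letting $\e\to0^+$ gives \eqref{1.3}. All of your measure-theoretic machinery --- density of preimages under $x\mapsto x^\lambda$, boundedness of $s$ on a density-one set, the Vijayaraghavan-type lemma for $\ta$ --- belongs to the statistical theorems of the paper, not to Theorem~B, and should be deleted from this proof.
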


We note that in the case of sequences of real numbers, a theorem analogous to Theorem~A was proved by Kwee \cite[Lemma~3]{het}.  

\section{New results} 

First, we prove that if a measurable function $s$ is slowly decreasing or oscillating with respect to summability $(L,1)$, then the existence of the stastistical limit~$\ell$ of~$s$ at~$\infty$ implies the existence of the ordinary limit of $s$ at $\infty$ to the same limit $\ell$. 

\begin{thm}\label{th1} 
If a real-valued function\/ $s:[1,\infty) \to \R $ is measurable and slowly decreasing with respect to summability\/ $(L,1)$, then the implication\/ $\eqref{1.2} \imply \eqref{1.3} $ holds true.
\end{thm}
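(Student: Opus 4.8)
The plan is to deduce the ordinary limit \eqref{1.3} by sandwiching: I would show separately that $\limsup_{x\to\infty} s(x) \le \ell$ and $\liminf_{x\to\infty} s(x) \ge \ell$, from which $\lim_{x\to\infty} s(x) = \ell$ follows at once. The decisive idea is that slow decrease converts a single argument at which $s$ is far from $\ell$ into an entire multiplicative block $[x,x^\lambda]$ on which $s$ stays far from $\ell$; since the Lebesgue measure of such a block is a fixed proportion of its right endpoint, this is incompatible with the density-zero content of the statistical limit \eqref{1.2}.

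For the upper estimate I would argue by contradiction. If $\limsup_{x\to\infty} s(x) > \ell$, I fix $\delta>0$ and a sequence $x_n\to\infty$ with $s(x_n)\ge \ell+3\delta$. Applying the reformulation \eqref{1.7} of slow decrease with $\e=\delta$ furnishes $x_0$ and $\lambda>1$ such that $s(t)\ge s(x)-\delta$ whenever $x_0\le x<t\le x^\lambda$; taking $x=x_n$ (for $n$ large enough that $x_n\ge x_0$) yields $s(t)\ge \ell+2\delta$, hence $|s(t)-\ell|>\delta$, for every $t\in[x_n,x_n^\lambda]$. Therefore the set in \eqref{1.2} associated with $\e=\delta$ and $b=x_n^\lambda$ contains the interval $[x_n,x_n^\lambda]$, so its relative measure is at least $(x_n^\lambda-x_n)/x_n^\lambda=1-x_n^{1-\lambda}\to 1$ as $n\to\infty$. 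This contradicts the requirement that the quantity in \eqref{1.2} tend to $0$.

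The lower estimate is entirely symmetric, the only change being that the same inequality \eqref{1.7} is now applied with the two arguments interchanged. If $\liminf_{x\to\infty} s(x)<\ell$, I pick $\delta>0$ and $x_n\to\infty$ with $s(x_n)\le \ell-3\delta$, and for $t$ in the interval $[x_n^{1/\lambda},x_n]$ I invoke \eqref{1.7} with the smaller argument being $t$ and the larger being $x_n$ (legitimate for $n$ large enough that $x_n^{1/\lambda}\ge x_0$, since then $x_n\le t^\lambda$). This gives $s(t)\le s(x_n)+\delta\le \ell-2\delta$, so again $|s(t)-\ell|>\delta$ on a whole interval, and with $b=x_n$ the relative measure is at least $1-x_n^{1/\lambda-1}\to 1$, contradicting \eqref{1.2} once more.

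The proof is short, and the point demanding care—which I would regard as the crux rather than a genuine obstacle—is the bookkeeping of the density estimate: one must exploit the multiplicative length of the bad block to see that its measure is a nonvanishing fraction of $b$ (in fact a fraction tending to $1$), and one must keep $\lambda$ fixed throughout, which is permissible precisely because $\delta$ is held fixed once the contradiction hypothesis is in force.
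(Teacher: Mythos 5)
Your proof is correct, and its organization differs genuinely from the paper's. You argue by contradiction in two symmetric halves: slow decrease spreads a single value $s(x_n)\ge \ell+3\delta$ forward over the whole block $(x_n,x_n^\lambda]$, and a single value $s(x_n)\le \ell-3\delta$ backward over $[x_n^{1/\lambda},x_n)$, and in either case the bad set acquires relative measure tending to $1$, contradicting \eqref{1.1}. The paper instead proceeds by direct construction: the statistical limit is used to build an increasing skeleton of \emph{good} points $b_n\to\infty$ with $|s(b_n)-\ell|\le\e$ whose multiplicative gaps eventually satisfy $b_{n+1}<b_n^\lambda$ (windows $(b_n^{\sqrt\lambda},b_n^\lambda)$ void of good points are excluded by essentially the same measure computation you use, except that there the badness of the window is hypothesized rather than derived from slow decrease), and then every $t\in(b_n,b_{n+1}]$ is sandwiched via $s(t)\ge s(b_n)-\e$ and $s(t)\le s(b_{n+1})+\e$. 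Both proofs thus rest on the same two pillars---bidirectional use of slow decrease and the fact that a multiplicative block $(x,x^\lambda)$ fills a non-vanishing fraction of $(1,x^\lambda)$---but they apply slow decrease at opposite kinds of points: at bad points in yours, at good points in the paper's. Yours is shorter and avoids the sequence bookkeeping; the paper's skeleton construction yields an explicit tail bound $|s(t)-\ell|\le 2\e$ for all $t>b_{n_0+1}$ and is reused verbatim in the proof of Theorem~\ref{th2}, although your contradiction template also adapts to the complex slowly oscillating case with even less work, since $|s(t)-\ell|\ge |s(x_n)-\ell|-|s(t)-s(x_n)|$ requires only forward propagation.
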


\begin{thm}\label{th2} 
If a complex-valued function\/ $s:[1,\infty) \to \C $ is measurable and slowly oscillating with respect to summability\/ $(L,1)$, then the implication\/ $\eqref{1.2} \imply \eqref{1.3} $ holds true.
\end{thm}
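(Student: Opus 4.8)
The plan is to argue by contradiction: from a hypothetical failure of \eqref{1.3} I will manufacture, using slow oscillation, a set on which $s$ stays bounded away from $\ell$ and whose relative measure tends to $1$, in direct conflict with the statistical limit \eqref{1.2}.

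First I would assume that \eqref{1.3} fails. Negating the definition of the ordinary limit yields some $\e>0$ together with a sequence $x_n\to\infty$ such that $|s(x_n)-\ell|>\e$ for every $n$. I would then invoke slow oscillation in the form \eqref{1.9}, applied with $\e/2$ in place of $\e$: this produces a threshold $x_0>1$ and an exponent $\lambda>1$ for which $|s(t)-s(x)|\le \e/2$ whenever $x_0\le x< t\le x^\lambda$. Fixing this $\lambda$ and discarding the finitely many indices with $x_n<x_0$, I would take $x=x_n$ and let $t$ run through $[x_n,x_n^\lambda]$; the triangle inequality then gives
\[
|s(t)-\ell|\ge |s(x_n)-\ell|-|s(t)-s(x_n)|> \e-\frac{\e}{2}=\frac{\e}{2}
\]
throughout that interval. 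Consequently the exceptional set $E:=\{t>1:|s(t)-\ell|>\e/2\}$ contains the whole interval $[x_n,x_n^\lambda]$ for all large $n$.

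The decisive step is a density estimate exploiting the multiplicative (logarithmic) geometry of these intervals. Evaluating the statistical density at the truncation level $b=x_n^\lambda$, I would bound
\[
\frac{1}{b}\bigl|E\cap(1,b)\bigr|\ge \frac{x_n^\lambda-x_n}{x_n^\lambda}=1-x_n^{1-\lambda}.
\]
Since $\lambda>1$ and $x_n\to\infty$, the right-hand side tends to $1$, so the density of $E$ fails to converge to $0$ along $b=x_n^\lambda$. This contradicts \eqref{1.2} read with threshold $\e/2$, and the contradiction forces \eqref{1.3}.

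I expect the only genuine subtlety to be this density computation, which is where the whole theorem lives. The point is that a single large deviation at $x_n$ is propagated by slow oscillation across the entire interval $[x_n,x_n^\lambda]$, whose length $x_n^\lambda-x_n$ is asymptotically comparable to its right endpoint $x_n^\lambda$; it is exactly this feature of the logarithmic scale --- in contrast to an additive window $[x_n,x_n+\delta]$, which would only yield a small positive density --- that drives the exceptional density up to $1$ and makes the clash with the statistical limit immediate. It is worth noting that this route uses neither the logarithmic mean $\ta$ nor any summability hypothesis: the statistical limit \eqref{1.2} and the Tauberian condition \eqref{1.9} are together sufficient.
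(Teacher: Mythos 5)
Your proof is correct. Negating \eqref{1.3} yields $\e>0$ and $x_n\to\infty$ with $|s(x_n)-\ell|>\e$; applying \eqref{1.9} with $\e/2$ propagates this deviation over all of $[x_n,x_n^\lambda]$, and since the relative measure $(x_n^\lambda-x_n)/x_n^\lambda=1-x_n^{1-\lambda}\to 1$, the exceptional set of level $\e/2$ has density tending to $1$ along $b=x_n^\lambda$, contradicting \eqref{1.1}. All steps check out, including the asymptotically immaterial difference between the normalizations $1/b$ and $1/(b-1)$.

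Your route is, however, organized differently from the paper's. The paper argues directly: using \eqref{1.1} it constructs an increasing sequence $(b_n)$ of \emph{good} points satisfying \eqref{4.2}, with multiplicatively controlled gaps $b_n^{\sqrt\lambda}<b_{n+1}<b_n^\lambda$ as in \eqref{4.4}, and then slow oscillation carries the bound $|s(b_n)-\ell|\le\e$ forward over each block $(b_n,b_{n+1}]$, covering a whole tail $(b_{n_0+1},\infty)$. You instead prove the contrapositive: a single \emph{bad} point spreads badness over a power-scale interval, which is too massive to be statistically negligible. Interestingly, your key density computation is not absent from the paper --- it is exactly how the paper rules out case \eqref{4.3} occurring infinitely often --- but in your argument it delivers the conclusion outright, so the entire sequence construction (the bulk of the paper's proof) becomes unnecessary. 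What your version buys is brevity and transparency for the two-sided condition \eqref{1.9}, where deviations of unknown phase still propagate forward. What the paper's construction buys is uniformity with the one-sided case of Theorem~\ref{th1}: under slow decrease alone, a deviation $s(x_n)<\ell-\e$ does \emph{not} propagate forward, and a contradiction argument in that setting would need a case split (propagating such deviations backward over $[x_n^{1/\lambda},x_n]$), whereas the paper's sandwich between the good points $b_n$ (from below, via \eqref{4.7}) and $b_{n+1}$ (from above, via \eqref{4.8}) handles it in one pass.
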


The next two theorems are the main results of the present paper. They state that under the Tauberian condition of slow decrease or slow oscillation, respectively,  \eqref{1.3} follows from the existence of the even weaker limit 
\begin{equation}\label{2.1}
\stlim_{t\to \infty} \tau(t) = \ell.
\end{equation} 

\begin{thm}\label{th3} 
If $s\in L^1_\loc [1,\infty)$ is a real-valued function and slowly decreasing with respect to summability\/ $(L,1)$, then the implication\/ $\eqref{2.1} \imply \eqref{1.3} $ holds true.
\end{thm}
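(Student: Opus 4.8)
The plan is to deduce Theorem~\ref{th3} from the two results already at our disposal, Theorem~A and Theorem~\ref{th1}, by inserting $\tau$ into the Tauberian machinery. Precisely, I would establish the chain
\[
\stlim_{t\to\infty}\tau(t)=\ell \;\imply\; \lim_{t\to\infty}\tau(t)=\ell \;\imply\; \lim_{x\to\infty}s(x)=\ell ,
\]
whose second arrow is exactly Theorem~A applied to the slowly decreasing function $s$. For the first arrow I would apply Theorem~\ref{th1} with $\tau$ in the role of $s$: since $\tau$ is continuous, hence measurable, this is legitimate as soon as $\tau$ is itself slowly decreasing with respect to summability $(L,1)$.

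So the first task is to understand slow decrease of $\tau$. For $1<y<t\le y^{\lambda}$ a direct computation starting from $\tau(t)\log t=\tau(y)\log y+\int_y^t s(x)/x\,dx$ gives
\[
\tau(t)-\tau(y)=-\,\frac{\log(t/y)}{\log t}\,\tau(y)+\frac1{\log t}\int_y^t\frac{s(x)}{x}\,dx .
\]
If $s$ is bounded, say $|s(x)|\le M$, then both terms are $O(M(\lambda-1))$, so $|\tau(t)-\tau(y)|\le 2M(\lambda-1)$ whenever $y<t\le y^{\lambda}$; choosing $\lambda$ close to $1$ matches condition \eqref{1.9}, so $\tau$ is in fact slowly oscillating, a fortiori slowly decreasing. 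Hence the whole theorem reduces to the assertion that a slowly decreasing $s$ whose logarithmic mean has a statistical limit must be bounded.

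This boundedness is the crux and is a nondiscrete, statistical analogue of a lemma of Vijayaraghavan; I expect it to be the main obstacle. I would argue by contradiction. Suppose $s$ is unbounded above, so $s(x_n)=:H_n\to+\infty$ along some $x_n\to\infty$. Applying the slow decrease condition \eqref{1.7} with $\e=1$ repeatedly, a large value propagates forward: $s(x)\ge H_n-k$ for $x\in(x_n,x_n^{\lambda^k}]$, so $s$ stays of size at least $H_n/2$ throughout the enormous multiplicative range $(x_n,x_n^{\lambda^{\lfloor H_n/2\rfloor}}]$. Because $\stlim_{t\to\infty}\tau(t)=\ell$ forces the set $\{t:|\tau(t)-\ell|\le1\}$ to have density $1$, I can select a good anchor $y_n$ in this range with $|\tau(y_n)-\ell|\le1$, so that the otherwise uncontrolled initial segment $\int_1^{y_n}s(x)/x\,dx=\tau(y_n)\log y_n$ is under control. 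Integrating the lower bound for $s$ from $y_n$ onward in the identity displayed above then yields $\tau(t)\ge H_n/2-O(1)$ for all $t$ in a subinterval of $(y_n,x_n^{\lambda^{\lfloor H_n/2\rfloor}}]$ whose relative Lebesgue measure tends to $1$; for large $H_n$ this produces a set of density bounded away from $0$ on which $\tau$ exceeds $\ell+1$, contradicting \eqref{2.1}. The symmetric backward propagation, using \eqref{1.7} in the form $s(x)\le s(x_n)+\e$ for $x_n^{1/\lambda}\le x<x_n$, rules out unboundedness below by forcing $\tau$ to be strongly negative on a set of density bounded away from $0$.

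Once $s$ is known to be bounded, the estimate of the second paragraph makes $\tau$ slowly oscillating and hence slowly decreasing; Theorem~\ref{th1} upgrades \eqref{2.1} to the ordinary limit \eqref{1.5}, and Theorem~A converts \eqref{1.5} into the desired \eqref{1.3}. The delicate points in the Vijayaraghavan step are the bookkeeping showing that the propagated values of $s$ cover a multiplicative range long enough to dominate the initial segment of $\tau$, and the verification that the resulting exceptional sets for $\tau$ have relative measure bounded away from $0$ rather than merely positive.
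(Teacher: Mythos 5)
Your proposal is correct, and while its outer skeleton coincides with the paper's, the key technical step is carried out by a genuinely different argument. The paper proves Theorem~\ref{th3} exactly along your chain: it shows $\tau$ is slowly decreasing, applies Theorem~\ref{th1} to $\tau$ (continuous, hence measurable) to upgrade \eqref{2.1} to \eqref{1.5}, and finishes with Theorem~A. The difference is \emph{how} the Tauberian property of $\tau$ is obtained. The paper does it deterministically, with no boundedness considerations: Lemmas~\ref{l1} and~\ref{l3} (nondiscrete Vijayaraghavan estimates) give $s(t)-s(x)\ge -B_1\log\bigl(\log t/\log x\bigr)$ for $x_0\le x<t^{1/\lambda}$ and the integrated bound \eqref{3.10}, which together with the elementary growth bound \eqref{4.10} control the four pieces of the decomposition \eqref{4.12} of $\tau(t)-\tau(x)$; this step uses only the slow decrease of $s$, never the statistical hypothesis. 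You instead use the statistical hypothesis twice: once to prove $s$ is bounded near infinity (propagation of large values, a statistically chosen anchor $y_n$, and a density contradiction), and then again inside Theorem~\ref{th1}; boundedness makes slow oscillation of $\tau$ a one-line estimate. I checked your propagation, anchor selection, and measure bookkeeping in the forward direction, and they close: the good set has density one, so it meets $(x_n,x_n^{\lambda}]$, and $y_n^2/T_n\to 0$ because $T_n=x_n^{\lambda^{\lfloor H_n/2\rfloor}}$ grows so fast. The one detail your sketch glosses over is the floor $x_0$ in the backward propagation: iterating $s(x)\le s(x_n)+1$ backward cannot pass below $x_0$, so you must split into two cases according to whether the range $[x_n^{1/\lambda^{k}},x_n]$ is capped by $k=\lfloor H_n/2\rfloor$ (then anchor near its left end and integrate, as in the forward case) or by reaching $x_0$ (then the left endpoint stays bounded, the initial segment $\int_1^{x_0^{\lambda}}|s(x)|/x\,dx$ is an absolute constant by local integrability, and no anchor is needed); in both cases $\tau$ is forced below $\ell-1$ on a set of relative measure tending to one, giving the contradiction. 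With that split made explicit your argument is complete. What the paper's route buys is brevity and reusability — the same lemmas serve Theorem~\ref{th4} verbatim, and ``slow decrease of $s$ implies slow decrease of $\tau$'' is a clean standalone fact; what your route buys is a statistical Vijayaraghavan-type boundedness theorem of independent interest, at the price of a longer case analysis.
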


\begin{thm}\label{th4} 
If $s\in L^1_\loc [1,\infty)$ is a complex-valued function and slowly oscillating with respect to summability\/ $(L,1)$, then the implication\/ $\eqref{2.1} \imply \eqref{1.3} $ holds true.
\end{thm}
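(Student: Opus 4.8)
The plan is to reduce this complex-valued statement to two applications of the already-established real-valued Theorem~\ref{th3}, by splitting $s$ into its real and imaginary parts. Write $s = s_1 + i s_2$ with $s_1 := \mathrm{Re}\, s$ and $s_2 := \mathrm{Im}\, s$. Since $s \in L^1_\loc[1,\infty)$, both $s_1$ and $s_2$ are real-valued functions in $L^1_\loc[1,\infty)$, so each is a legitimate candidate for Theorem~\ref{th3}.

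First I would observe that the slow oscillation of $s$ descends to each coordinate. From the pointwise bounds $|s_1(t) - s_1(x)| \le |s(t) - s(x)|$ and $|s_2(t) - s_2(x)| \le |s(t) - s(x)|$, the characterization \eqref{1.9} of slow oscillation for $s$ immediately yields the same condition for $s_1$ and for $s_2$, with the same $x_0$ and $\lambda$. Hence both $s_1$ and $s_2$ are slowly oscillating, and therefore, being real-valued, in particular slowly decreasing with respect to summability $(L,1)$.

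Next I would transfer the statistical limit \eqref{2.1} to the coordinates. By linearity of the integral in \eqref{1.4}, the logarithmic means satisfy $\tau = \tau_1 + i \tau_2$, where $\tau_1$ and $\tau_2$ denote the logarithmic means of $s_1$ and $s_2$; thus $\tau_1 = \mathrm{Re}\, \tau$ and $\tau_2 = \mathrm{Im}\, \tau$. Because $|\mathrm{Re}\,\tau(t) - \mathrm{Re}\,\ell| \le |\tau(t) - \ell|$, for each $\e > 0$ the superlevel set $\{ t : |\tau_1(t) - \mathrm{Re}\,\ell| > \e \}$ is contained in $\{ t : |\tau(t) - \ell| > \e \}$, whose relative measure tends to $0$ by \eqref{2.1}. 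Hence $\stlim_{t\to\infty} \tau_1(t) = \mathrm{Re}\,\ell$, and symmetrically $\stlim_{t\to\infty} \tau_2(t) = \mathrm{Im}\,\ell$.

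Finally, each of $s_1$ and $s_2$ satisfies exactly the hypotheses of Theorem~\ref{th3}: it is a real-valued function in $L^1_\loc[1,\infty)$, slowly decreasing with respect to summability $(L,1)$, whose logarithmic mean has a statistical limit. Applying Theorem~\ref{th3} twice gives $\lim_{x\to\infty} s_1(x) = \mathrm{Re}\,\ell$ and $\lim_{x\to\infty} s_2(x) = \mathrm{Im}\,\ell$, and combining these via $|s(x) - \ell| \le |s_1(x) - \mathrm{Re}\,\ell| + |s_2(x) - \mathrm{Im}\,\ell|$ yields \eqref{1.3}. I do not expect a genuine obstacle in this argument; the only point requiring care is the measure-theoretic verification that the statistical limit of a complex-valued function splits into the statistical limits of its real and imaginary parts, which rests on the elementary set inclusions above together with the monotonicity of Lebesgue measure.
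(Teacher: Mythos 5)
Your proof is correct, but it takes a genuinely different route from the paper's. The paper proves Theorem~\ref{th4} by rerunning the proof of Theorem~\ref{th3} in the complex setting: Lemma~\ref{l4} (the complex counterpart of Lemma~\ref{l3}) shows that the logarithmic mean $\tau$ inherits slow oscillation from $s$, then Theorem~\ref{th2} converts the statistical limit \eqref{2.1} into the ordinary limit \eqref{1.5} of $\tau$, and finally Theorem~B (in place of Theorem~A) yields \eqref{1.3}. You instead reduce the complex case to the real one: writing $s = s_1 + i s_2$, you note that slow oscillation descends to $s_1$ and $s_2$ (so each, being real-valued, is in particular slowly decreasing), that \eqref{2.1} descends to their logarithmic means $\mathrm{Re}\,\tau$ and $\mathrm{Im}\,\tau$ via the inclusion $\{ t : |\mathrm{Re}\,\tau(t)-\mathrm{Re}\,\ell| > \e \} \subseteq \{ t : |\tau(t)-\ell| > \e \}$ and monotonicity of Lebesgue measure, and you then apply Theorem~\ref{th3} twice and recombine. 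Each of these steps is sound, and linearity of the mean \eqref{1.4} indeed gives $\mathrm{Re}\,\tau$, $\mathrm{Im}\,\tau$ as the means of $s_1$, $s_2$. What your reduction buys is economy: Lemmas~\ref{l2} and~\ref{l4}, Theorem~\ref{th2}, and Theorem~B all become unnecessary for this theorem, and the same decomposition would likewise derive Theorem~\ref{th2} from Theorem~\ref{th1} and Theorem~B from Theorem~A. What the paper's parallel development buys is that the complex-valued auxiliary results stand as statements of independent interest, and its method of estimating $|s(t)-s(x)|$ directly would carry over to settings (for instance norm-valued estimates for vector-valued functions) where a splitting into two real coordinates is not available.
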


We note that analogous theorems were proved in \cite{tiz} for sequences of real and complex numbers, respectively. However, the method of the proof in the present paper is more straightforward than that in \cite{tiz}. As a result, the present proofs are essentially shorter and more transparent than those in \cite{tiz}. 

\section {Auxiliary results} 

Our Lemma~\refz{l1} is analogous to the famous Vijayaraghavan lemma (see in \cite{tizenharom} and also in \cite[Theorem~239 on p.~307]{negy}), which relates to the slow decrease with respect to summability $(C,1)$ in the case of sequences of real numbers. Our Lemma~\ref{l1} relates to slow decrease with respect to summability $(L,1)$ in the case of real-valued functions. 

\begin{lem}\label{l1} 
If a function\/ $s:[1,\infty) \to \R $ is such that the condition\/ \eqref{1.7} is satisfied only for $\e:=1$, where $x_0>1$ and $\lambda>1$, then there exists a constant $B_1>0$ such that 
\begin{equation} \label{3.1} 
s(t) - s(x) \ge -B_1 \log \Big( \frac {\log t } { \log x} \Big)  \qquad \text{whenever} \quad x_0\le x < t^{1/\lambda}. 
\end{equation}
\end{lem}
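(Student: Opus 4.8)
The plan is to pass to logarithmic variables and run a telescoping (chaining) argument, which is the natural nondiscrete analogue of the Vijayaraghavan lemma. Writing $u:=\log x$, $v:=\log t$, $u_0:=\log x_0$, and $S(u):=s(e^u)$, the hypothesis \eqref{1.7} with $\e:=1$ becomes: $S(v)-S(u)\ge -1$ whenever $u_0\le u<v\le \lambda u$. The target \eqref{3.1} becomes: $S(v)-S(u)\ge -B_1\log(v/u)$ whenever $u\ge u_0$ and $v>\lambda u$, since $x<t^{1/\lambda}$ is equivalent to $x^\lambda<t$, that is, to $\lambda u<v$.

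First I would fix $u\ge u_0$ and $v>\lambda u$ and build a finite chain $u=w_0<w_1<\cdots<w_n=v$ in which every ratio $w_{k+1}/w_k$ lies in $(1,\lambda]$ and every $w_k\ge u_0$. The explicit choice is $w_k:=\lambda^k u$ for $0\le k\le n-1$ together with $w_n:=v$, where $n:=\lceil \log(v/u)/\log\lambda\rceil$ is chosen so that $\lambda^{n-1}u<v\le \lambda^n u$. Because $v>\lambda u$ we have $n\ge 2$; the first $n-1$ links have ratio exactly $\lambda$, while the last link has ratio $v/(\lambda^{n-1}u)\in(1,\lambda]$, and each $w_k\ge u\ge u_0$. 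Consequently each link $(w_k,w_{k+1})$, $0\le k\le n-1$, satisfies the hypothesis of the base condition.

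Next, applying the base inequality $S(w_{k+1})-S(w_k)\ge -1$ to each of the $n$ links and telescoping yields $S(v)-S(u)\ge -n$. It then remains to bound $n$ by a constant multiple of $\log(v/u)$. Setting $\alpha:=\log(v/u)/\log\lambda$, we have $\alpha>1$ precisely because we are in the regime $v>\lambda u$, so $n=\lceil\alpha\rceil\le \alpha+1<2\alpha$. Therefore
\[
S(v)-S(u)\ge -n>-\frac{2}{\log\lambda}\,\log\frac{v}{u},
\]
and translating back via $v/u=\log t/\log x$ gives $s(t)-s(x)\ge -B_1\log(\log t/\log x)$ with $B_1:=2/\log\lambda$, as required.

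The main obstacle is not any single estimate but the bookkeeping of the chain: one must ensure that every intermediate point stays above the threshold $u_0$ (equivalently, above $x_0$) so that the base condition applies at each link, and one must treat the final, generally shorter, link separately. The one genuinely delicate point is converting the integer step-count $n$ into the logarithmic bound, since the ceiling contributes an additive "$+1$" that can only be absorbed into a constant multiple of $\log(v/u)$ because we are in the regime $t>x^\lambda$, where $\log(v/u)>\log\lambda>0$ keeps $\alpha$ bounded away from $0$.
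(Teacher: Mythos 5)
Your proof is correct and follows essentially the same route as the paper: a telescoping chain of points in geometric progression (in the logarithmic variable) with ratio $\lambda$, the bound $S(v)-S(u)\ge -n$ on the number of links, and absorption of the additive $+1$ using $\log(v/u)>\log\lambda$ in the regime $t>x^\lambda$, yielding the same constant $B_1=2/\log\lambda$. The only cosmetic differences are your substitution $u=\log x$, $v=\log t$ and the fact that you chain upward from $x$ while the paper chains downward from $t$ via $t_p:=t_{p-1}^{1/\lambda}$.
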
 

\begin{proof} 
Given $x_0\le x < t^{1/\lambda} $, we form the decreasing sequence 
\begin{equation} \label{3.2} 
t_0:= t, \qquad t_p := t_{p-1} ^{1/\lambda}, \qquad p=1,2,\dots, q+1,
\end{equation}
where $q$ is defined by the condition
\begin{equation} \label{3.3} 
t_{q+1}  \le x < t_q . 
\end{equation}
By \eqref{1.7} and \eqref{3.3}, we estimate as follows: 
\begin{equation} \label{3.4} 
s(t)-s(x) = \sum_{p=1} ^q \big ( s(t_{p-1}) -s(t_p) \big) + \big( s(t_q) -s(x) \big) \ge -q-1. 
\end{equation}
It is clear that 
\begin{equation} \label{3.5} 
\frac 1 {\lambda^q} \log t > \log x, \quad \text {or equivalently} \quad q< \frac 1 {\log \lambda} \log\Big( \frac{\log t}{\log x} \Big).
\end{equation}
Combining \eqref{3.4} and \eqref{3.5} gives 
\begin{equation} \label{3.6} 
s(t)-s(x) \ge -1- \frac 1 {\log \lambda} \log\Big( \frac{\log t}{\log x} \Big) \qquad \text {whenever} \quad x_0\le x < t^{1/\lambda}. 
\end{equation}
Since it follows from $ x< t^{1/\lambda} $ that 
\begin{equation} \label{3.7} 
\log \lambda < \log\Big( \frac{\log t}{\log x} \Big) , 
\end{equation}
we conclude from \eqref{3.6} that \eqref{3.1} holds with $B_1:= 2/\log \lambda$. 
\end{proof} 

Our Lemma~\ref{l2} is the counterpart of Lemma~\ref{l1} in the case of complex-valued functions. 

\begin{lem}\label{l2} 
If a function\/ $s:[1,\infty) \to \C $ is such that the condition\/ \eqref{1.9} is satisfied only for $\e:=1$, where $x_0>1$ and $\lambda>1$, then with  $B_1 := 2/ \log \lambda$  we have  
\begin{equation} \label{3.8} 
|s(t) - s(x) | \le B_1 \log \Big( \frac {\log t } { \log x} \Big)  \qquad \text{whenever} \quad x_0\le x < t^{1/\lambda}. 
\end{equation}
\end{lem}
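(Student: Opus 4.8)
The plan is to imitate the proof of Lemma~\ref{l1} step by step, the single essential change being that the one-sided telescoping estimate is replaced by a two-sided one obtained from the triangle inequality. Since hypothesis \eqref{1.9} controls the full modulus $|s(t)-s(x)|$ rather than merely the one-sided difference $s(t)-s(x)$, this replacement is entirely routine: where Lemma~\ref{l1} could only produce a lower bound, the triangle inequality will directly bound the modulus.

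Concretely, I would fix $x$ and $t$ with $x_0 \le x < t^{1/\lambda}$ and reuse verbatim the decreasing geometric sequence $t_0 := t$, $t_p := t_{p-1}^{1/\lambda}$ for $p = 1, \dots, q+1$, with $q$ determined by $t_{q+1} \le x < t_q$, exactly as in \eqref{3.2}--\eqref{3.3}. Each consecutive pair satisfies $t_p < t_{p-1} = t_p^\lambda$, so \eqref{1.9} with $\e := 1$ gives $|s(t_{p-1}) - s(t_p)| \le 1$; likewise $x < t_q \le x^\lambda$ yields $|s(t_q) - s(x)| \le 1$. Writing the telescoping identity $s(t) - s(x) = \sum_{p=1}^q \big(s(t_{p-1}) - s(t_p)\big) + \big(s(t_q) - s(x)\big)$ and applying the triangle inequality, I obtain $|s(t) - s(x)| \le q + 1$, the two-sided analogue of \eqref{3.4}.

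The remaining estimate on $q$ is word-for-word the same as in the real case. From $\log x < \log t_q = \lambda^{-q}\log t$ I get $q < (\log\lambda)^{-1}\log\big(\log t/\log x\big)$, as in \eqref{3.5}, while $x < t^{1/\lambda}$ forces $\log\lambda < \log\big(\log t/\log x\big)$, as in \eqref{3.7}. Hence $q + 1 < 2(\log\lambda)^{-1}\log\big(\log t/\log x\big)$, which together with $|s(t)-s(x)| \le q+1$ yields \eqref{3.8} with $B_1 = 2/\log\lambda$. I do not expect any genuine obstacle here; the only point needing care is that the two inequalities for $q$ combine so as to absorb the additive constant $1$ into the logarithmic term, and this is guaranteed precisely by \eqref{3.7}.
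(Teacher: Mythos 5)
Your proposal is correct and follows exactly the paper's own argument: the same geometric sequence \eqref{3.2}--\eqref{3.3}, the telescoping sum with the triangle inequality giving $|s(t)-s(x)|\le q+1$, and the combination of \eqref{3.5} with \eqref{3.7} to absorb the additive constant into the logarithmic term with $B_1 = 2/\log\lambda$. No gaps; this is the paper's proof.
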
 

\begin{proof} 
It goes along the same lines as the proof of Lemma~\ref{l1}. For given  $x_0\le x < t^{1/\lambda} $, we define $t_0$, $t_1$, \dots, $t_{q+1} $ by \eqref{3.2} and \eqref{3.3}. Using \eqref{1.9} and \eqref{3.4} gives 
\begin{equation} \label{3.9} 
|s(t)-s(x)| \le \sum_{p=1} ^q \big | s(t_{p-1} -s(t_p) \big| + \big| s(t_q) -s(x) \big| \le q+1. 
\end{equation}
Combining \eqref{3.5} and \eqref{3.9} we obtain 
\[
|s(t)-s(x)| \le 1 + \frac 1 {\log \lambda} \log\Big( \frac{\log t}{\log x} \Big) \qquad \text {whenever} \quad x_0\le x < t^{1/\lambda} 
\]
(c.f.\ \eqref{3.6}). Tking into account \eqref{3.7}, hence \eqref{3.8} follows with the same constant $B_1 := 2/\log\lambda$ as in Lemma~\ref{l1}. 
\end{proof}

The next two lemmas are corollaries of Lemmas \ref{l1} and \ref{l2}, respectively. 

\begin{lem}\label{l3} 
Under the assumptions of Lemma\/ \refz{l1}, there exists a constant $B_2>0$ such that 
\begin{equation}\label{3.10} 
\frac 1 {\log t} \int _{x_0} ^t \frac{s(t)-s(x) }{ x} \,dx \ge -B_2 \qquad \text{whenever} \quad  t> x_0 ^\lambda .
 \end{equation}
\end{lem}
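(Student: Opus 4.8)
The plan is to split the integral at the point $x=t^{1/\lambda}$ and estimate the two resulting pieces by different means. For the ``near'' piece $t^{1/\lambda}\le x\le t$ the arguments satisfy $x<t\le x^\lambda$, so the hypothesis \eqref{1.7} with $\e:=1$ applies directly and gives $s(t)-s(x)\ge-1$; note that $t>x_0^\lambda$ guarantees $t^{1/\lambda}>x_0$, so the split point indeed lies in the range of integration. For the ``far'' piece $x_0\le x<t^{1/\lambda}$ we are exactly in the regime covered by Lemma~\ref{l1}, so \eqref{3.1} supplies the pointwise bound $s(t)-s(x)\ge-B_1\log(\log t/\log x)$. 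Accordingly I would write
\[
\frac{1}{\log t}\int_{x_0}^t\frac{s(t)-s(x)}{x}\,dx
=\frac{1}{\log t}\int_{x_0}^{t^{1/\lambda}}\frac{s(t)-s(x)}{x}\,dx
+\frac{1}{\log t}\int_{t^{1/\lambda}}^t\frac{s(t)-s(x)}{x}\,dx
\]
and bound each summand from below.

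The near piece is immediate: using $s(t)-s(x)\ge-1$,
\[
\frac{1}{\log t}\int_{t^{1/\lambda}}^t\frac{s(t)-s(x)}{x}\,dx
\ge-\frac{1}{\log t}\int_{t^{1/\lambda}}^t\frac{dx}{x}
=-\Big(1-\frac1\lambda\Big)\ge-1,
\]
a bound independent of $t$.

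For the far piece the decisive step is the substitution $u=\log x$. Writing $T:=\log t$ and inserting \eqref{3.1}, one gets
\[
\frac{1}{\log t}\int_{x_0}^{t^{1/\lambda}}\frac{s(t)-s(x)}{x}\,dx
\ge-\frac{B_1}{T}\int_{\log x_0}^{T/\lambda}\log\frac{T}{u}\,du,
\]
and the inner integral is elementary, since $u\log(T/u)+u$ is an antiderivative of $\log(T/u)=\log T-\log u$. Evaluating at the two limits and dividing by $T$ yields the constant main term $\lambda^{-1}(\log\lambda+1)$ together with a correction $R(t):=T^{-1}\log x_0\big(\log(T/\log x_0)+1\big)$ subtracted from it.

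The only point that requires care is to confirm that this correction does not spoil uniformity in $t$. Since $x_0>1$ and $T>\lambda\log x_0>\log x_0$ for every $t>x_0^\lambda$, the correction satisfies $R(t)\ge0$, so that $T^{-1}\int_{\log x_0}^{T/\lambda}\log(T/u)\,du\le\lambda^{-1}(\log\lambda+1)$ uniformly. Hence the far piece is bounded below by $-B_1\lambda^{-1}(\log\lambda+1)$, and adding the near-piece estimate establishes \eqref{3.10} with, for instance, $B_2:=B_1\lambda^{-1}(\log\lambda+1)+1$.
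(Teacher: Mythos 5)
Your proof is correct and follows essentially the same route as the paper's: the same split of the integral at $t^{1/\lambda}$, with \eqref{1.7} (for $\e=1$) handling the piece over $(t^{1/\lambda},t)$ and Lemma~\ref{l1} handling the piece over $(x_0,t^{1/\lambda})$. Your substitution $u=\log x$ is just a cleaner packaging of the paper's integration by parts in \eqref{3.12}, and the sign observation $R(t)\ge 0$ replaces the paper's bound $(\log x_0)/(\log t)<1/\lambda$; the resulting constant $B_2$ differs only in form, which is immaterial.
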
 

\begin{proof} 
Without loss of generality, we may assume that $x_0 >e $. By \eqref{1.7} with $\e:=1$ and \eqref{3.1}, we estimate as follows:
\begin{align}\label{3.11}
\int _{x_0} ^t &\frac{s(t)-s(x) }{ x} \,dx = \Big\{ \int _{x_0}^{t^{1/\lambda}} + \int _{t^{1/\lambda}}^ t \Big\} \frac{s(t)-s(x) }{ x} \,dx \\
&\qquad \ge -B_1 \int _{x_0}^{t^{1/\lambda}} \frac{1}{x} \log \Big( \frac {\log t}{\log x} \Big) \,dx - \int _{t^{1/\lambda}}^ t \frac{dx}{x} \nonumber \\
&\qquad \ge -B_1 (\log\log t) \int _{x_0}^{t^{1/\lambda}} \frac{dx}{x} + B_1 \int _{x_0}^{t^{1/\lambda}} \frac{\log\log x}{x} \,dx -\frac{\lambda-1}{\lambda} \log t. \nonumber
\end{align} 
Integration by parts gives 
\begin{align}\label{3.12}
& \int _{x_0}^{t^{1/\lambda}}  \frac{\log\log x}{x} \,dx = \Big[ (\log\log x) \log x \Big ] _{x_0}^{t^{1/\lambda}} - \int _{x_0}^{t^{1/\lambda}}  \frac{dx}{x} \\
&\qquad = (\log \log (t^{1/\lambda}) \log t^{1/\lambda} - (\log \log x_0) \log x_0 - \log t^{1/\lambda} + \log x_0 \nonumber\\
&\qquad = \frac { (\log \log t) \log t }{\lambda} - \frac{\log \lambda}{\lambda} \log t - (\log \log x_0) \log x_0 -\log  t^{1/\lambda} + \log x_0. \nonumber
\end{align} 
Returning to \eqref{3.11}, we obtain 
\begin{align}\label{3.13} 
& \int _{x_0}^t \frac{s(t)-s(x)}{x} \,dx \\
&\qquad \ge -B_1 \frac{\log \lambda}{\lambda} \log t  - B_1 (\log \log x_0) \log x_0 - B_1 \frac{\log t}{\lambda} +B_1 \log x_0  \nonumber\\
&\qquad \ge -B_1 (\log t)\Big(  \frac{\log \lambda}{\lambda} + \frac{ (\log \log x_0) \log x_0 }{\log t} + \frac 1\lambda \Big)  \nonumber\\
&\qquad \ge -B_1 (\log t) \Big(  \frac{\log \lambda}{\lambda} + \frac{ (\log \log x_0)}{\lambda } + \frac 1\lambda \Big) ,  \nonumber
\end{align} 
where we took into account that $(\log x_0) / (\log t ) < 1/\lambda$. Now, the last expression on the right-hand side of \eqref{3.13} proves \eqref {3.11} with the constant 
\begin{equation}\label {3.14} 
B_2 := \frac {B_1}{\lambda} ( \log \lambda + \log \log x_0 +1 ) . 
\end{equation}
\end{proof}

The counterpart of Lemma~\ref{l3} in the case of complex-valued functions reads as follows. 

\begin{lem}\label{l4} 
Under the assumptions of Lemma\/ \refz{l2}, there exists a constant $B_2>0$ such that 
\begin{equation}\label{3.15} 
\frac 1 {\log t} \int _{x_0} ^t \frac{ |s(t)-s(x)| }{ x} \,dx \le B_2 \qquad \text{whenever} \quad  t> x_0 ^\lambda .
\end{equation}
\end{lem}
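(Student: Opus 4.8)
The plan is to mirror the proof of Lemma~\ref{l3} almost verbatim, replacing the one-sided lower bound \eqref{3.1} by the two-sided bound \eqref{3.8} from Lemma~\ref{l2}. As before, I would assume without loss of generality that $x_0>e$, so that $\log\log x_0$ is well defined and positive, and I would split the integral at $t^{1/\lambda}$:
\[
\int_{x_0}^t \frac{|s(t)-s(x)|}{x}\,dx = \Big\{ \int_{x_0}^{t^{1/\lambda}} + \int_{t^{1/\lambda}}^t \Big\} \frac{|s(t)-s(x)|}{x}\,dx.
\]
On the inner interval $x_0\le x<t^{1/\lambda}$ I would invoke \eqref{3.8} to bound $|s(t)-s(x)|\le B_1\log(\log t/\log x)$, while on the outer interval $t^{1/\lambda}\le x<t$ I would use \eqref{1.9} with $\e:=1$ to bound $|s(t)-s(x)|\le 1$, giving $\int_{t^{1/\lambda}}^t dx/x = (\lambda-1)\log t/\lambda$.

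The second step is the same integral evaluation as in Lemma~\ref{l3}: I would write
\[
B_1\int_{x_0}^{t^{1/\lambda}} \frac{1}{x}\log\Big(\frac{\log t}{\log x}\Big)\,dx = B_1(\log\log t)\int_{x_0}^{t^{1/\lambda}} \frac{dx}{x} - B_1\int_{x_0}^{t^{1/\lambda}} \frac{\log\log x}{x}\,dx,
\]
and then apply the integration by parts identity \eqref{3.12} to evaluate $\int_{x_0}^{t^{1/\lambda}}(\log\log x)/x\,dx$. Collecting terms exactly as in \eqref{3.13}, the factor $\log\log t$ cancels, and using $(\log x_0)/(\log t)<1/\lambda$ for $t>x_0^\lambda$ I expect to arrive at
\[
\int_{x_0}^t \frac{|s(t)-s(x)|}{x}\,dx \le B_1(\log t)\Big(\frac{\log\lambda}{\lambda}+\frac{\log\log x_0}{\lambda}+\frac{1}{\lambda}\Big) + \frac{\lambda-1}{\lambda}\log t.
\]
Dividing by $\log t$ then yields \eqref{3.15} with a constant of the form $B_2 := \frac{B_1}{\lambda}(\log\lambda+\log\log x_0+1) + \frac{\lambda-1}{\lambda}$.

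I do not anticipate any genuine obstacle, since the argument is the term-by-term absolute-value analogue of Lemma~\ref{l3}. The only point requiring a moment of care is the extra contribution from the outer integral $\int_{t^{1/\lambda}}^t dx/x$: in the real-valued Lemma~\ref{l3} this piece appeared with a negative sign (it \emph{helped} the lower bound), whereas here, because we are bounding $|s(t)-s(x)|$ from above, it contributes positively and must be added rather than discarded. This merely augments the final constant $B_2$ by the term $(\lambda-1)/\lambda$ and does not affect the structure of the estimate. The rest of the bookkeeping — the cancellation of the $\log\log t$ term and the absorption of the $x_0$-dependent constants using $t>x_0^\lambda$ — is identical to the real-valued case.
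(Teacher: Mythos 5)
Your proof is correct and follows essentially the same route as the paper's: split the integral at $t^{1/\lambda}$, apply the bound \eqref{3.8} on the inner piece and \eqref{1.9} with $\e:=1$ on the outer piece, then evaluate via the integration by parts identity \eqref{3.12} and use $(\log x_0)/(\log t)<1/\lambda$. Your bookkeeping is in fact slightly more careful than the paper's, which claims \eqref{3.15} with the same constant $B_2$ as in \eqref{3.14} and thereby silently drops the outer-integral contribution $(\lambda-1)/\lambda$ that you correctly add; since the lemma only asserts the existence of some $B_2>0$, this discrepancy is immaterial.
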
 

\begin{proof} 
It goes along analogous lines as the proof of Lemma~\ref{l3}. By \eqref{1.9}  with $\e:=1$ and \eqref{3.8}, we estimate as follows.
\begin{equation} \label{3.16}
\int _{x_0} ^t \frac{|s(t)-s(x) |}{ x} \,dx \le B_1 \int _{x_0}^{t^{1/\lambda}} \frac{1}{x} \log \Big( \frac {\log t}{\log x} \Big) \,dx + 
\int _{t^{1/\lambda}}^ t \frac{dx}{x} 
\end{equation}
(cf.~\eqref{3.11}). Combining  \eqref{3.12} and {3.16} we obtain 
\begin{equation}
\int _{x_0} ^t \frac{|s(t)-s(x) |}{ x} \,dx \le  B_1 (\log t) \Big(  \frac{\log \lambda}{\lambda} + \frac{ (\log \log x_0)}{\lambda } + \frac 1\lambda \Big) . 
\end{equation}
Thus, we have proved \eqref{3.15} with the same constant $B_2$ as given in \eqref {3.14}. 
\end{proof}

\section{Proofs of Theorems \ref{th1}--\ref{th4}}

\begin{proof}[Proof of Theorem~\refz{th1}] 
Let $\e>0$, $x_0>1$ and $\lambda >1$ be arbitrarily given. By assumption, the statistical limit $\ell$ of the function $s$ exists at $\infty$. Thus, by \eqref{1.1} with $a:=1$, there exists $b_1\ge x_0$ such that 
\[ 
|s(b_1)-\ell| \le \e.
\]
We distinguish between two cases: 

\begin{enumerate} 
\item[(i)] There exists some $b_2 \in ( b_1^{ \sqrt\lambda} , b_1 ^\lambda)$ such that 
\begin{equation}\label{4.1} 
|s(b_2) - \ell| \le \e; 
\end{equation} 
\item[(ii)] There is no such $b_2$; that is, we have 
\begin{equation*}\
|s(t) - \ell| > \e\qquad \text{for every}  \qquad t \in ( b_1^{ \sqrt\lambda} , b_1 ^\lambda).
\end{equation*} 
\end{enumerate}

In the latter case, we choose some $b_2 \ge b_1 ^\lambda$ for which \eqref{4.1} is satisfied. Due to \eqref{1.1}, such $b_2 $ certainly exists. 

We repeat the previous step by starting with $b_2$, and so on. As a result, we obtain an increasing sequence $(b_n : n=1,2,\dots)$ of numbers such that 
\begin{equation}\label{4.2} 
|s(b_n) - \ell| \le \e, \qquad n=1,2,\dots . 
\end{equation} 
We claim that the case when 
\begin{equation}\label{4.3} 
|s(t) - \ell| > \e, \qquad \text{for every} \quad t \in ( b_n^{ \sqrt\lambda} , b_n ^\lambda)
\end{equation} 
cannot occur for infinitely many values of $n$. Otherwise, for infinitely many $n$ we would have 
\[
\frac 1{b_n} \Big| \{ t\in(1,b_n) : |s(t)-\ell| >\e \} \Big| 
=  b_n ^{\lambda-1} -b_n^{ \sqrt\lambda -1} > b_1 ^{\lambda-1} -b_1^{ \sqrt\lambda -1} >0,  
\]
and this contradicts \eqref{1.1}. Consequently, inequality \eqref{4.3} can occur only for finitely many values of $n$. Denote by $n_0$ the largest value of $n$ for which \eqref{4.3} holds (perhaps $n_0=0$ in the case when \eqref{4.3} does not occur at all). Consequently, we have 
\begin{equation}\label{4.4} 
b_{n+1} < b_n ^\lambda \qquad \text{for} \quad n=n_0+1, n_0+2,\dots\,. 
\end{equation} 
On the other hand, by definition we also have 
\begin{equation*} 
b_{n+1} > b_n ^{\sqrt\lambda} \qquad \text{for} \quad n=n_0+1, n_0+2,\dots\,, 
\end{equation*} 
whence it follows that 
\[
\lim_{n\to\infty} b_n =\infty. 
\]

By the condition \eqref{1.7} of slow decrease, we have 
\begin{equation} \label{4.5} 
s(t) - s(b_n) \ge -\e \qquad \text{whenever} \quad x_0\le b_n<t\le b_n^{\lambda} , \ \ n>n_0. 
\end{equation} 
Now, let $t\in (b_n, b_{n+1} ]$ for some $n>n_0$. By \eqref{4.4}, we have 
\begin{equation} \label{4.6} 
b_n< t \le b_{n+1} < b_n^{\lambda} <t^\lambda . 
\end{equation} 

On the one hand, it follows from \eqref{4.2} and \eqref{4.5} that if $n>n_0$, then for every $t\in (b_n, b_{n+1} ]$,  
\begin{equation} \label{4.7} 
s(t)-\ell = (s(t)-s(b_n)) + (s(b_n)-\ell) \ge -2\e. 
\end{equation} 
On the other hand, it follows from \eqref{4.2} and \eqref{4.4}--\eqref{4.6} that 
\begin{equation} \label{4.8} 
s(t)-\ell = (s(t)-s(b_{n+1})) + (s(b_{n+1})-\ell) \le 2\e. 
\end{equation} 

Putting together \eqref{4.7} and \eqref{4.8} gives 
\[
 | s(t)-\ell| \le 2\e \qquad \text{for every} \quad t \in \bigcup _{n=n_0+1}^\infty (b_n, b_{n+1}] = (b_{n_0+1} , \infty). 
\]
Since $\e>0$ is arbitrary, this proves that the ordinary limit of $s$ exists at $\infty $ and it equals $\ell$. 
\end{proof}

\begin{proof}[Proof of Theorem~\refz{th2}] 
It is analogous to the proof of Theorem~\ref{th1}. Again, we can show that for every $\e>0$ and $\lambda>1$, there exists an increasing sequence $(b_n : n=1,2,\dots)$ of numbers tending to $\infty$,  while conditions \eqref{4.2} and \eqref{4.4} are also satisfied. 

By the condition \eqref{1.9} of slow oscillation, we have 
\begin{equation} \label{4.9} 
 | s(t) - s(b_n) | \le \e \qquad \text{whenever} \quad x_0\le b_n<t\le b_n^{\lambda} 
\end{equation}
(cf.~\eqref{4.5}). Now, it follows from \eqref{4.2}, \eqref{4.4} and \eqref{4.9} that 
\begin {align*} 
 | s(t)-\ell| &\le | s(t) - s(b_n)| + | s(b_n) -\ell| \le 2\e \\
 & \qquad \qquad \text{for every} \quad t \in \bigcup _{n=n_0+1}^\infty (b_n, b_{n+1}] = (b_{n_0+1} , \infty). 
\end{align*} 
Since $\e>0$ is arbitrary, this proves that the ordinary limit of $s$ exists at $\infty $ and it equals $\ell$. 
\end{proof}

\begin{proof}[Proof of Theorem~\refz{th3}] 
It hinges on Lemma~\ref{l3} and Theorem~\ref{th1}.

First, we prove that if the condition \eqref{1.7} of slow decrease is satisfied for a single $\e>0$, say $\e:=1$, then we have 
\begin{equation} \label{4.10} 
\liminf _{x\to\infty} \frac{s(x)}{x} \ge 0. 
\end{equation}
Indeed, from \eqref{1.7} with $\e=1$ it follows that for $p=1,2,\dots$ we have 
\[
s(x_0^{\lambda^p} )- s(x_0) \ge -p , \qquad \text{where} \quad x_0:=x_0(1)>1 \text{ and } \lambda=\lambda(1) >1; 
\]
whence we conclude that 
\[
\frac {s(x_0^{\lambda^p} ) } {x_0^{\lambda^p} } \ge \frac { s(x_0) } {x_0^{\lambda^p} } - \frac {p} { x_0^{\lambda^p} } \to 0 \qquad \text{as} \quad p\to \infty. 
\]
Now, the fulfillment of \eqref{4.10} is obvious. 

Second, we prove that if the real-valued function $s\in L^1_\loc [1,\infty)$ is slowly decreasing, then so is its logarithmic mean $\tau(t)$ defined in \eqref{1.4}. To this effect, let some $0<\e<1$ be given, and let $x_0 \le x <t\le x^\lambda$, where $x_0=x_0(\e)$ and $\lambda = \lambda(\e) $ occur in \eqref{1.7} and this time $\lambda$ is chosen so close to $1$ that 
\begin{equation}\label{4.11} 
1 < \lambda \le 1+ \frac{\e} { \max\{1, B_2\} } \,, 
\end{equation} 
where $B_2$ is from \eqref{3.10}. 

By definiton \eqref{1.4}, we estimate as follows
\begin{align} \label{4.12} 
 &\tau(t) - \tau(x) := \frac 1{\log t} \int _1^t \frac {s(u)} {u} \,du  - \frac 1{\log x} \int _1^x \frac {s(u)} {u} \,du \\
&\qquad = - \Big( \frac1{\log x} -\frac 1{\log t } \Big) \int _1^x \frac {s(u)} {u} \,du  + \frac 1{\log t } \int _x^t \frac {s(u)} {u} \,du \nonumber \\
&\qquad = \Big( \frac1{\log x} -\frac 1{\log t } \Big) \int _1^x  \frac {s(x)- s(u)} {u} \,du   + \frac 1{\log t } \int _x^t \frac {s(u)-s(x)} {u} \,du \nonumber \\
&\qquad = \Big( \frac1{\log x} -\frac 1{\log t } \Big) \Big\{ \int_1^{x_0} + \int_{x_0}^x \Big\}  \frac {s(x)- s(u)} {u} \,du + \frac 1{\log t } \int _x^t \frac {s(u)-s(x)} {u} \,du\nonumber  \\
&\qquad = \Big( \frac1{\log x} -\frac 1{\log t } \Big) (\log x_0) s(x) - \Big( \frac1{\log x} -\frac 1{\log t } \Big) \int_1^{x_0} \frac {s(u)} {u} \,du \nonumber \\
&\qquad \qquad + \Big( \frac1{\log x} -\frac 1{\log t } \Big) \int _{x_0} ^x  \frac {s(x)- s(u)} {u} \,du +  \frac 1{\log t } \int _x^t \frac {s(u)-s(x)} {u} \,du \nonumber \\
&\qquad =: J_1 + J_2 + J_3 +J_4, \qquad \text{say}. \nonumber 
\end{align} 

It follows from \eqref{4.10} that 
\begin{equation} \label{4.13} 
\liminf _{x\to\infty} J_1 \ge 0. 
\end{equation}

Since $s\in L^1_\loc [1,\infty)$, we have 
\begin{equation} \label{4.14} 
\lim _{x\to\infty} J_2 = 0. 
\end{equation}

It follows from $x<t\le x^\lambda $ that 
\begin{equation} \label{4.15} 
\frac 1 \lambda \le \frac {\log x }{\log t } \, . 
\end{equation}
Using inequalities \eqref{4.11} and \eqref {4.13}, by Lemma~\ref{l3} we estimate as follows
\begin{equation} \label{4.16} 
J_3 \ge -B_2 \Big ( 1 -  \frac {\log x }{\log t } \Big) \ge -B_2 \Big(1-\frac 1{\lambda} \Big) \ge -B_2 (\lambda-1) = -\e. 
\end{equation}

Using again the condition \eqref{1.7} of slow decrease; \eqref{4.11} and \eqref{4.15} gives 
\begin{equation} \label{4.17} 
J_4 \ge - \frac 1 {\log t} \int _ x ^t \frac {du}{u} =  - \Big ( 1 -  \frac {\log x }{\log t } \Big) \ge - \Big(1-\frac 1{\lambda} \Big)  \ge -(\lambda-1) \ge -\e. 
\end{equation}

Combining \eqref{4.12}--\eqref{4.17} yields 
\[
\tau(t) - \tau(x) \ge -4\e \qquad \text {whenever} \quad x_0\le x < t \le x^\lambda, 
\]
provided that $x$ is large enough, where we also took into account the limit relations in \eqref{4.13} and \eqref{4.14}. This proves that $\tau(t)$ is also slowly decreasing. Therefore, we may apply Theorem~\ref{th1} in Section 2, according to which $\tau(t)$ converges in the ordinary sense as $t\to \infty$ to the same limit $\ell$. 

Finally, we apply Theorem~A in Section 1 to conclude the ordinary convergence of $s(x)$ to $\ell$ as $x\to \infty$, again due to the slow decrease of the function $s$. 

The proof of Theorem~\ref{th3} is complete. 
\end {proof}

\begin{proof}[Proof of Theorem~\refz{th4}] 
It is analogous to the proof of Theorem~\ref{th3}, while using Lemma~\ref{l4} instead of Lemma~\ref{l3}, and applying Theorem~B instead of Theorem~A in the last step in the proof. The details are left to the reader. 
\end{proof}

\end{document}